\documentclass[11pt,reqno]{amsart}

\usepackage[letterpaper,margin=1.15in]{geometry}
\usepackage{amsmath,amssymb,mathtools}
\usepackage{aliascnt}
\usepackage{microtype}
\usepackage[dvipsnames]{xcolor}
\usepackage[colorlinks=true,linkcolor=MidnightBlue,
  citecolor=ForestGreen,urlcolor=BrickRed]{hyperref}
\usepackage[nameinlink,capitalise,noabbrev]{cleveref}

\numberwithin{equation}{section}

\theoremstyle{plain}
\newtheorem{theorem}{Theorem}[section]
\newaliascnt{proposition}{theorem}
\newtheorem{proposition}[proposition]{Proposition}
\aliascntresetthe{proposition}
\newaliascnt{lemma}{theorem}
\newtheorem{lemma}[lemma]{Lemma}
\aliascntresetthe{lemma}
\newaliascnt{corollary}{theorem}
\newtheorem{corollary}[corollary]{Corollary}
\aliascntresetthe{corollary}

\theoremstyle{definition}
\newaliascnt{definition}{theorem}
\newtheorem{definition}[definition]{Definition}
\aliascntresetthe{definition}
\newaliascnt{example}{theorem}
\newtheorem{example}[example]{Example}
\aliascntresetthe{example}

\theoremstyle{remark}
\newaliascnt{remark}{theorem}
\newtheorem{remark}[remark]{Remark}
\aliascntresetthe{remark}

\newcommand{\C}{\mathbb C}
\newcommand{\Z}{\mathbb Z}
\newcommand{\D}{\mathbb D}
\newcommand{\Ds}{\mathbb D^*}
\newcommand{\eps}{\varepsilon}
\newcommand{\dd}{\,\mathrm d}
\newcommand{\Area}{\operatorname{Area}}
\newcommand{\ord}{\operatorname{ord}}

\title[Polynomial area growth for special K\"ahler metrics]
{Polynomial Area Growth and Cubic Singularities of\\
Affine Special K\"ahler Metrics in Real Dimension Two}

\author{Yiqian Shi}
\author{Ke Wang}
\author{Bin Xu}

\address[Yiqian Shi and Bin Xu]{School of Mathematical Sciences,
University of Science and Technology of China, Hefei 230026, China;
CAS Wu Wen-Tsun Key Laboratory of Mathematics, University of Science
and Technology of China, Hefei 230026, China}
\address[Ke Wang]{School of Mathematical Sciences, Luoyang Normal University, Luoyang 471934, China}
\email[Ke Wang, corresponding author]{wangke16@mails.ucas.edu.cn}
\date{August 2026}

\subjclass[2020]{Primary 53C26; Secondary 30F45, 32Q15}
\keywords{affine special K\"ahler metric, isolated singularity,
holomorphic cubic differential, polynomial area growth,
Ahlfors--Schwarz lemma, hyperbolic metric}

\hypersetup{
  pdftitle={Polynomial Area Growth and Cubic Singularities of Affine Special Kahler Metrics in Real Dimension Two},
  pdfauthor={Yiqian Shi, Ke Wang and Bin Xu},
  pdfsubject={Polynomial area growth excludes essential singularities of the associated cubic differential},
  pdfkeywords={affine special Kahler metric, isolated singularity, cubic differential, polynomial area growth, Ahlfors-Schwarz lemma}
}

\begin{document}

\begin{abstract}
Let $g=w|dz|^2$ be an affine special K\"ahler metric on a punctured
disc and let $\Xi=q(z)\,dz^3$ be its associated holomorphic cubic
differential.  We prove that if the $g$-area of concentric punctured
neighbourhoods grows at most polynomially, then $q$ is meromorphic at
the puncture.  More precisely, an area bound of order $\eps^{-N}$ forces
every pole of $q$ to have order strictly less than $N+3$.  Hence an
essential singularity of the cubic differential rules out every
polynomial area upper bound.  The proof associates to $(g,\Xi)$ a
curvature $-1$ conformal pseudometric and applies the Ahlfors--Schwarz
lemma, followed by the submean inequality for holomorphic functions.
We also discuss the identically zero cubic differential and the
borderline logarithmic growth responsible for the strict pole-order
bound.
\end{abstract}

\maketitle

\section{Introduction}

Special K\"ahler structure lies at the interface 
of differential geometry and mathematical physics. It 
originated in the study of $N=2$ supersymmetric gauge theory. 
In mathematics, special K\"ahler structures arise naturally on 
the base of holomorphic Lagrangian fibrations and in the theory 
of Hitchin systems, and they have become an active research area 
in both geometry and physics.
	
An affine special K\"ahler structure is a K\"ahler structure together
with a flat, torsion-free, symplectic connection satisfying an additional
symmetry condition; see \cite{Freed1999}.
The departure of the connection $\nabla$ from the 
Levi--Civita connection is measured by a holomorphic
cubic differential $\Xi$. Singular special K\"ahler metrics arise
naturally on bases of holomorphic Lagrangian fibrations.  Lu proved that
every complete affine special K\"ahler metric is flat
\cite[Theorem~1]{Lu1999}.  This rigidity makes isolated singularities an
important part of the local theory.

In complex dimension one, Haydys \cite[Theorem~1.1]{Haydys2015} 
classified the asymptotic behaviour
of an affine special K\"ahler metric near an isolated puncture, assuming
that the associated cubic differential has no essential singularity.  If
\[
  g=w|dz|^2,\qquad \Xi=q(z)\,dz^3,
\]
and $n=\ord_0\Xi\in\Z$, then the metric has one of the 
following two asymptotic forms:
\begin{equation}\label{eq:Haydys-types}
  w=-|z|^{n+1}\log|z|\,e^{O(1)}
  \quad\text{or}\quad
  w=|z|^\beta\bigl(C+o(1)\bigr),
  \qquad C>0,\quad \beta<n+1.
\end{equation}
The hypothesis on $\Xi$ cannot simply be omitted from that classification:
special K\"ahler structures whose cubic differentials have essential
singularities do exist; see \cite{HaydysXu2020}.

Li and Xu \cite[Theorem~1.2]{LiXu2020} considered a different local
problem.  They classified isolated singularities of conformal metrics
with zero Gaussian curvature whose area satisfies a polynomial growth
condition.  In the notation used here, their condition is that there exist
\begin{equation}\label{eq:area-intro}
  0<R<1,\qquad M>0,\qquad N\geq0
\end{equation}
such that
\begin{equation}\label{eq:area-r}
  \Area_g\{1/r<|z|<R\}\leq Mr^N
  \qquad\text{for every }r>1/R.
\end{equation}
The radii in \eqref{eq:area-r} are Euclidean coordinate radii, not
geodesic radii.  The property is unchanged by a holomorphic coordinate
change fixing the puncture \cite[Remark~1.5]{LiXu2020}.

One cannot directly apply the theorem of Li and Xu to a special K\"ahler
metric: in special K\"ahler geometry it is the connection that is flat,
whereas the Riemannian metric generally has nonzero Gaussian curvature.  The
purpose of this note is to show that the \emph{area condition} from
\cite{LiXu2020}, when combined with the special K\"ahler equation, does
exclude essential cubic singularities.

For $0<\eps<R$, set
\begin{equation}\label{eq:area-def}
  A_g(\eps;R)
  :=\Area_g\{\eps<|z|<R\}
  =\int_{\eps<|z|<R} w\,\dd x\,\dd y.
\end{equation}
Thus \eqref{eq:area-r} is equivalent to
\begin{equation}\label{eq:area-eps}
  A_g(\eps;R)\leq M\eps^{-N},
  \qquad 0<\eps<R.
\end{equation}

\begin{definition}[Polynomial area growth]\label{def:area-growth}
We say that a conformal metric $g=w|dz|^2$ on $\Ds$ has
\emph{polynomial area growth at the puncture} if there exist
$R\in(0,1)$, $M>0$, and $N\geq0$ for which
\eqref{eq:area-eps} holds.
\end{definition}

Our main result is the following.

If $\Xi$ is nonzero and meromorphic at the origin, we write
$\ord_0\Xi:=\ord_0q$.  This integer does not depend on the chosen
coordinate.  Indeed, if $\zeta=\phi(z)$ is such a change and
$\Xi=\widetilde q(\zeta)\,d\zeta^3$, then
$\widetilde q(\phi(z))=q(z)\phi'(z)^{-3}$, and $\phi'$ does not vanish
near the origin.  Moreover, $\phi(z)=z\psi(z)$ for a holomorphic function
$\psi$ with $\psi(0)\neq0$.  Hence the orders computed in the $z$- and
$\zeta$-coordinates agree.

\begin{theorem}\label{thm:main}
Let $(g,I,\omega,\nabla)$ be an affine special K\"ahler structure on
$\Ds=\{0<|z|<1\}$, and write
\[
  g=w|dz|^2,
  \qquad
  \Xi=q(z)\,dz^3.
\]
Assume that $g$ has polynomial area growth in the sense of
\Cref{def:area-growth}, with exponent $N$.  Then $q$ extends
meromorphically across $0$.
Consequently, either $q\equiv0$, or $\ord_0\Xi$ is a finite integer.
Moreover, if $q$ has a pole of order $p$ at $0$, then
\begin{equation}\label{eq:pole-bound}
  p<N+3.
\end{equation}
\end{theorem}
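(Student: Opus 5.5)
The plan is to turn the special K\"ahler equation into a pointwise comparison between $|q|$ and $w$, and then turn the area bound into a growth bound on $q$ by averaging. Recall the local form of the special K\"ahler condition in real dimension two (Freed, Haydys): away from the zeros of $q$,
\[
  \Delta\log w = c\,\frac{|q|^2}{w^2}
\]
for a universal constant $c>0$. In particular the Gaussian curvature of $g$ is nonnegative, so no direct curvature comparison for $g$ is available.

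\emph{Step 1: a negatively curved pseudometric.} I would build a conformal pseudometric $h=\lambda\,|dz|^2$ on $\Ds$ with density an explicit algebraic expression in $w$ and $|q|$, vanishing only at the zeros of $q$, with Gaussian curvature $\leq -1$ wherever $\lambda>0$. The first attempt is the monomials $w^a|q|^b$, for which $\Delta\log\lambda=ac|q|^2/w^2$. This fails: constant negative curvature would force $a=-2$, and that exponent gives positive curvature. So I expect the construction to combine the flat metric $|q|^{2/3}|dz|^2$ with a function of the norm $|\Xi|_g^2=|q|^2/w^3$, for instance through a logarithm of that ratio. I would tune this function so that $\Delta\log\lambda\ge 2\lambda$ holds identically, using the equation above.

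\emph{Step 2: Ahlfors--Schwarz.} Apply the Ahlfors--Schwarz lemma on $\Ds$ to compare $h$ with the complete hyperbolic metric $|dz|^2/(|z|^2\log^2|z|)$. This holds globally, with no boundary condition at the puncture, since $\lambda$ is continuous on $\Ds$ and the hyperbolic metric of $\Ds$ is the maximal curvature $-1$ metric. Unwinding the definition of $\lambda$, the target inequality is
\[
  |q(z)|\;\le\; C\,\frac{w(z)}{|z|\,\bigl|\log|z|\bigr|}
  \qquad (0<|z|<R'),
\]
or a variant with the same homogeneity in $w$ and $|z|$. The essential point is that $|q|$ appears to the first power against $w$. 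A bound on $|q|^{2/3}$ instead would only give pole order below $\tfrac32N+3$.

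\emph{Step 3: submean inequality.} For $|z|$ small, $|q|$ is subharmonic on the disc $D(z,|z|/2)\subset\{|z|/2<|\zeta|<R\}$, so
\[
  |q(z)|\le \frac{4}{\pi|z|^2}\int_{D(z,|z|/2)}|q|
  \le \frac{C'}{|z|^3\,\bigl|\log|z|\bigr|}\,A_g(|z|/2;R)
  \le \frac{C''}{|z|^{N+3}\,\bigl|\log|z|\bigr|}.
\]
Hence $z^{m}q(z)\to0$ for every integer $m\ge N+3$, so by Riemann's removable singularity theorem $q$ is meromorphic at $0$. If $q\not\equiv0$, the order $\ord_0\Xi$ is then finite. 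If $q$ has a pole of order $p$, then $|q|\ge c|z|^{-p}$ near $0$. Comparing with the bound above gives $|z|^{N+3-p}\bigl|\log|z|\bigr|\le C$, which is impossible when $p\ge N+3$, since the logarithm is unbounded. This is how the logarithmic factor yields the strict inequality \eqref{eq:pole-bound}. The case $q\equiv0$ needs no argument.

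\emph{Main obstacle.} Step 1 is the hard step: finding the curvature $-1$ pseudometric explicitly and verifying $\Delta\log\lambda\ge2\lambda$ from the special K\"ahler equation. Closely tied to it is making sure the exponents come out so that the comparison in Step 2 is linear in $|q|$ versus $w$ and keeps the factor $1/(|z|\,|\log|z||)$. I would try first an ansatz $\lambda=|q|^{2/3}F\!\bigl(|q|^2/w^3\bigr)$. I would then solve the resulting ODE for $F$ so that the curvature is exactly $-1$, and check afterwards which power of $|q|/w$ the resulting inequality controls.
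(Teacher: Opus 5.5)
Your Steps 2 and 3 match the paper's argument: Ahlfors--Schwarz against the cusp metric of $\Ds$ gives $|q|\le w/(4|z|\log(1/|z|))$, and then the submean inequality on a small disc, the area bound, Riemann's theorem, and the logarithm giving strictness finish the proof. But there is a genuine gap at Step 1, which is the load-bearing step. You never produce the negatively curved pseudometric, so the key inequality you state as a ``target'' in Step 2 is not proved. Moreover, you discard the natural candidate because of a sign error. The special K\"ahler equation is $-\Delta\log w=16|q|^2/w^2$, so $\Delta\log w\le 0$. It is not $\Delta\log w=c|q|^2/w^2$ with $c>0$. Your own remark that $K_g=-\frac{1}{2w}\Delta\log w\geq 0$ already forces this sign, so your displayed equation contradicts the sentence after it.

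With the correct sign, your monomial ansatz works exactly. Take $a=-2$, $b=2$, so the conformal factor is $\lambda=16|q|^2/w^2$. Off the zeros of $q$,
\[
  \Delta\log\lambda=-2\,\Delta\log w=32\,\frac{|q|^2}{w^2}=2\lambda,
\]
so $\lambda|dz|^2$ has curvature exactly $-1$. At a zero $z_0$ of order $m$ of $q$ one has $\lambda=|z-z_0|^{2m}e^{v}$ with $v$ smooth. Hence $\log\lambda\to-\infty$ there, and the maximum-principle proof of Ahlfors--Schwarz goes through. Comparing with $|dz|^2/(|z|^2\log^2|z|)$ gives $4|q|/w\le 1/(|z|\log(1/|z|))$. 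This is your target with $C=1/4$, linear in $|q|$ as you wanted, and it is exactly the paper's pseudometric ($4|q|/w$ in its square-root convention). It also lies inside your proposed family $|q|^{2/3}F(|q|^2/w^3)$, with $F(x)=16x^{2/3}$, so no logarithm is needed. A genuinely nonlinear $\log F$, as a function of $\log(|q|^2/w^3)$, would produce a $|\nabla\log(|q|^2/w^3)|^2$ term that the equation does not control. The fix is a one-line sign correction, but as written the proposal does not establish the estimate on which everything else rests.
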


The theorem has two immediate interpretations.  First, the contrapositive
says that an essential singularity of $\Xi$ prevents the area from being
$O(\eps^{-N})$ for every fixed $N$.  Second, except for the harmless case
$\Xi\equiv0$, the polynomial area hypothesis may replace the finite-order
hypothesis in \cite{Haydys2015}; the asymptotic alternatives
\eqref{eq:Haydys-types} then follow.

\paragraph{Proof mechanism.}
The proof uses only three standard ingredients: the local special K\"ahler
equation, an elementary maximum-principle proof of the Ahlfors--Schwarz
lemma, and the submean inequality for the absolute value of a holomorphic
function.  The only point at which special K\"ahler geometry enters is the
local equation recorded in \Cref{prop:SK-equation}.

The paper is organized as follows.  \Cref{sec:local-equation} recalls the
local special K\"ahler equation and establishes the
Ahlfors--Schwarz estimate used below. \Cref{sec:main-proof} proves the main theorem
using the associated hyperbolic pseudometric, and gives an alternative proof
 using Laurent coefficients.  \Cref{sec:consequences}
discusses essential singularities, area lower bounds, the zero cubic
differential, and the equivalence with the finite-order hypothesis.

\section{The local special K\"ahler equation and the Ahlfors–Schwarz estimate}
\label{sec:local-equation}

To relate the area growth of the special K\"ahler metric to the
singularity of its associated cubic differential, we first recall
the local special K\"ahler equation. We begin by fixing our normalisations.
The Euclidean Laplacian is
\[
  \Delta=\partial_x^2+\partial_y^2,
  \qquad z=x+iy.
\]

\begin{definition}
An affine special K\"ahler structure on a Riemann surface is a quadruple
$(g,I,\omega,\nabla)$ such that $(g,I,\omega)$ is K\"ahler,
$\nabla$ is flat, torsion-free and symplectic, and
\begin{equation}\label{eq:SK-def}
  (\nabla_XI)Y=(\nabla_YI)X
\end{equation}
for all vector fields $X,Y$.
\end{definition}

The difference between $\nabla$ and the Levi--Civita connection determines
a holomorphic cubic differential $\Xi$.  In a holomorphic coordinate it
has the form $\Xi=q(z)\,dz^3$.  The following equation is standard; we
spell out how it follows from the local formulae in
\cite{Haydys2015,HaydysXu2020}.

\begin{proposition}[Local special K\"ahler equation]
\label{prop:SK-equation}
Let $g=e^{-u}|dz|^2$ be an affine special K\"ahler metric on a punctured
disc and let $\Xi=q(z)\,dz^3$.  Then
\begin{equation}\label{eq:SK-PDE}
  \Delta u=16|q|^2e^{2u}.
\end{equation}
Equivalently, if $w=e^{-u}$, then
\begin{equation}\label{eq:logw-PDE}
  -\Delta\log w=16\frac{|q|^2}{w^2}.
\end{equation}
\end{proposition}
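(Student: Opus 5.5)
Since \eqref{eq:SK-PDE} is a pointwise identity between local quantities, it suffices to verify it on small simply connected discs $U\subset\Ds$. On such a disc I would use the standard local description of an affine special K\"ahler structure in complex dimension one, as recorded in \cite{Haydys2015,HaydysXu2020}. Flatness and symplecticity of $\nabla$, together with \eqref{eq:SK-def}, give a \emph{special holomorphic coordinate} $\zeta$ on $U$ and a holomorphic function $\tau(\zeta)$ with $\operatorname{Im}\tau>0$ (namely $\tau=F''$ for a local holomorphic prepotential $F$). In terms of these,
\[
  g=\operatorname{Im}\tau\,|d\zeta|^2,\qquad \Xi=c\,\tau'(\zeta)\,d\zeta^3 ,
\]
where $c$ is the universal constant fixed by the normalisation of $\Xi$ in \cite{Haydys2015}. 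I expect $|c|=1/4$, and this is what produces the factor $16$. I would take this local form as the input and cite it.

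Next I would pass to the arbitrary holomorphic coordinate $z$. Write $h=d\zeta/dz$, which is holomorphic and nonvanishing on $U$, and regard $\tau$ as a function of $z$. Then
\[
  w=\operatorname{Im}\tau\,|h|^2,\qquad q=c\,\tau_\zeta\,h^3,\qquad \tau_z=\tau_\zeta h .
\]
Since $\log|h|^2$ is harmonic, only $\log\operatorname{Im}\tau$ contributes to $\Delta\log w$. I would use $\Delta=4\partial_z\partial_{\bar z}$ and $\operatorname{Im}\tau=(\tau-\bar\tau)/(2i)$. A short computation gives
\[
  \partial_z\partial_{\bar z}\log(\tau-\bar\tau)=\frac{|\tau_z|^2}{(\tau-\bar\tau)^2}=-\frac{|\tau_z|^2}{4(\operatorname{Im}\tau)^2},
\]
and hence
\[
  -\Delta\log w=\frac{|\tau_z|^2}{(\operatorname{Im}\tau)^2}.
\]

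On the other side,
\[
  \frac{|q|^2}{w^2}=|c|^2\frac{|\tau_\zeta|^2|h|^6}{(\operatorname{Im}\tau)^2|h|^4}=|c|^2\frac{|\tau_z|^2}{(\operatorname{Im}\tau)^2}.
\]
With $|c|=1/4$ this is exactly \eqref{eq:logw-PDE}. Substituting $w=e^{-u}$ turns \eqref{eq:logw-PDE} into \eqref{eq:SK-PDE}.

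The two sides are coordinate-independent in the right way: $w$ scales by $|\phi'|^2$ and $q$ by $\phi'^3$, so $|q|^2/w^2$ picks up $|\phi'|^2$, which matches the transformation of $\Delta$. Therefore the identity proved on each $U$ holds on all of $\Ds$.

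The main obstacle is not the computation but the bookkeeping of conventions. One must confirm that the cubic differential of the paper, defined via $\nabla-\nabla^{LC}$, equals $\tfrac14 F'''\,d\zeta^3$ up to a unimodular factor, in the normalisation of \cite{Haydys2015,HaydysXu2020}. A different convention would change $16$ into another positive constant, and the rest of the argument would be unaffected. I would check this by comparing directly with the formula $\Delta u=16|q|^2e^{2u}$ (or its equivalent) as stated in those references.
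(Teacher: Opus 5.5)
Your argument is correct, but it goes through a different local model than the paper's.

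The paper works on the whole punctured disc. It imports Haydys' description of the structure by a harmonic function $h$ together with a real monodromy parameter $a$, so that $\Delta u=|\dd h+a\varphi|^2e^{2u}$, and it also imports the explicit formula \eqref{eq:q-ha} for $q$. The proof then reduces to the algebraic identity \eqref{eq:q-norm}.

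You instead work on small simply connected discs, in Freed's special coordinate $\zeta$ with $\tau=F''$. Because the identity is pointwise, monodromy is irrelevant and you never need the cohomology parameter $a$. Your computation also shows more: $4|q|/w=|\tau_z|/\operatorname{Im}\tau$. So the pseudometric $\lambda^2|dz|^2$ of \eqref{eq:lambda-def} is locally the pullback, under the holomorphic map $\tau$, of the Poincar\'e metric $|d\tau|^2/(\operatorname{Im}\tau)^2$ of the upper half-plane. This explains why $\lambda$ has curvature $-1$, and why its zeros sit at the zeros of $q$ with the form \eqref{eq:pseudometric-zero}.

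What the paper's route buys is the constant. Formula \eqref{eq:q-ha} fixes the normalisation of $\Xi$ explicitly, whereas in your version the factor $16$ hinges on $|c|=1/4$, which you only expect. That is in fact the normalisation the paper uses: equating your formula $-\Delta\log w=|\tau_z|^2/(\operatorname{Im}\tau)^2$ with \eqref{eq:logw-PDE} forces $|q|=\tfrac14|\tau_\zeta||\zeta_z|^3$. However, that is a consistency check, not a proof. To pin $c$ down non-circularly, derive it from the definition of $\Xi$ via $\nabla-\nabla^{\mathrm{LC}}$, or from Freed's formula, rather than by comparing with the equation you are trying to prove.

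Your final coordinate-invariance paragraph is harmless but unnecessary, since you already carried out the computation in the given coordinate $z$.
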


\begin{proof}
We review the local-coordinate calculation.  On a simply connected domain,
the special K\"ahler equations can be written in terms of a harmonic
function $h$ as
\begin{equation}\label{eq:KW-simple}
  \Delta h=0,
  \qquad
  \Delta u=|\dd h|^2e^{2u}.
\end{equation}
On a punctured disc there is one additional cohomology parameter.  If
\begin{equation}\label{eq:phi}
  \varphi=\frac{y\,\dd x-x\,\dd y}{x^2+y^2},
\end{equation}
then every special K\"ahler structure determines a harmonic function $h$
and a real number $a$ such that
\begin{equation}\label{eq:KW-punctured}
  \Delta h=0,
  \qquad
  \Delta u=|\dd h+a\varphi|^2e^{2u}.
\end{equation}
These are the local formulae of
\cite[Corollaries~2.2 and~2.3]{Haydys2015}; their derivation is obtained by
writing the flat symplectic connection in a real frame and imposing
\eqref{eq:SK-def}.

With the same normalisation, the coefficient of the associated cubic
differential is
\begin{equation}\label{eq:q-ha}
  q=\frac12\left(\frac{a}{2z}-i\frac{\partial h}{\partial z}\right).
\end{equation}
Writing $\partial h/\partial z=(h_x-ih_y)/2$ and substituting
\eqref{eq:phi}, a direct calculation gives
\begin{equation}\label{eq:q-norm}
  16|q|^2=|\dd h+a\varphi|^2,
\end{equation}
where the norm on the right is Euclidean.  Substitution of
\eqref{eq:q-norm} into \eqref{eq:KW-punctured} proves
\eqref{eq:SK-PDE}.  Finally, $u=-\log w$ gives
\eqref{eq:logw-PDE}.
\end{proof}

\begin{remark}\label{rem:curvature}
The Gaussian curvature of $g=w|dz|^2$ is
\[
  K_g=-\frac{1}{2w}\Delta\log w
      =8\frac{|q|^2}{w^3}\geq0.
\]
Thus the Riemannian metric is generally not flat.  This is the precise
reason why the flat-metric classification of \cite{LiXu2020} cannot be
applied directly.
\end{remark}

Having recorded the local equation (2.2), we now introduce 
a hyperbolic conformal pseudometric, which will bridge it to the area hypothesis. We use the classical
Ahlfors--Schwarz comparison \cite{Ahlfors1938}, but include the short
maximum-principle proof needed here so that zeros of the pseudometric cause
no ambiguity.

\begin{definition}
A conformal pseudometric on a plane domain $\Omega$ is an expression
$\lambda^2|dz|^2$, where $\lambda\colon\Omega\to[0,\infty)$ is continuous
and is positive and $C^2$ away from isolated zeros.  At points where
$\lambda>0$, its Gaussian curvature is
\begin{equation}\label{eq:conf-curv}
  K_\lambda=-\frac{1}{\lambda^2}\Delta\log\lambda.
\end{equation}
\end{definition}

We only need pseudometrics whose zeros have the form
\begin{equation}\label{eq:pseudometric-zero}
  \lambda(z)=|z-z_0|^m e^{v(z)},
  \qquad m\in\Z_{\geq1},
\end{equation}
with $v$ smooth.  In particular, $\log\lambda$ tends to $-\infty$ at a
zero.

\begin{lemma}[Ahlfors--Schwarz with isolated zeros]
\label{lem:AS-disc}
Let $\lambda^2|d\zeta|^2$ be a conformal pseudometric on the unit disc
$\D$.  Suppose its zeros satisfy \eqref{eq:pseudometric-zero} and
\begin{equation}\label{eq:AS-hyp}
  \Delta\log\lambda\geq\lambda^2
\end{equation}
wherever $\lambda>0$.  Then
\begin{equation}\label{eq:AS-disc-bound}
  \lambda(\zeta)\leq\frac{2}{1-|\zeta|^2}.
\end{equation}
\end{lemma}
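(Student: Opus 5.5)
We will prove \eqref{eq:AS-disc-bound} by the standard maximum-principle argument, applied on slightly smaller discs to get a boundary blow-up of the comparison metric. Fix $0<r<1$ and set
\[
  \mu_r(\zeta)=\frac{2r}{r^2-|\zeta|^2},\qquad |\zeta|<r .
\]
This is the hyperbolic metric of curvature $-1$ on $\{|\zeta|<r\}$, so in our normalisation
\[
  \Delta\log\mu_r=\mu_r^2 .
\]
The key quantity is
\[
  F:=\log\lambda-\log\mu_r\qquad\text{on } \{|\zeta|<r\}\setminus\{\lambda=0\}.
\]
We aim to show $F\le0$ and then let $r\to1$, since $\mu_r(\zeta)\to 2/(1-|\zeta|^2)$.

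First we check where $F$ can attain its supremum. Because $\lambda$ is continuous on $\overline{\{|\zeta|<r\}}\subset\D$, it is bounded there, while $\mu_r\to+\infty$ as $|\zeta|\to r$. Hence $F\to-\infty$ at the boundary circle. At zeros of $\lambda$, the form \eqref{eq:pseudometric-zero} gives $\log\lambda\to-\infty$ while $\mu_r$ is finite and positive, so again $F\to-\infty$. Consequently, if $\sup F>0$, the supremum is attained at an interior point $\zeta_0$ with $\lambda(\zeta_0)>0$. Near $\zeta_0$ both functions are $C^2$.

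At $\zeta_0$ we then have $\Delta F(\zeta_0)\le0$. On the other hand, \eqref{eq:AS-hyp} gives
\[
  \Delta F\ge \lambda^2-\mu_r^2 .
\]
Since $F(\zeta_0)>0$ means $\lambda(\zeta_0)>\mu_r(\zeta_0)$, the right-hand side is strictly positive at $\zeta_0$. This contradiction shows $F\le0$, i.e. $\lambda\le\mu_r$ on $\{|\zeta|<r\}$. Fixing $\zeta$ and letting $r\to1^-$ yields \eqref{eq:AS-disc-bound}.

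The only delicate point is the justification that the maximum avoids both the boundary and the zeros. That is exactly why we shrink to radius $r$, so that $\lambda$ stays bounded near the boundary while $\mu_r$ blows up. It is also why the zero form \eqref{eq:pseudometric-zero} is assumed, which forces $\log\lambda=-\infty$ there. The remaining ingredients are routine: the curvature identity for $\mu_r$ is a direct computation.
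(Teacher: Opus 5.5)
Your proof is correct and is essentially the paper's argument: you compare $\lambda$ with the Poincar\'e density of the shrunken disc, locate an interior maximum at a point where $\lambda>0$, derive a contradiction from $\Delta\log\lambda\geq\lambda^2$ and $\Delta\log\mu_r=\mu_r^2$, and then let $r\to1$. The only cosmetic difference is that you work with $\log\lambda-\log\mu_r$, which equals $-\infty$ at the zeros, whereas the paper uses the continuous quotient $\lambda/\lambda_s$ and passes to its logarithm only at the maximum point.
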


\begin{proof}
Fix $s\in(0,1)$ and consider the Poincar\'e density on the disc
$\D_s=\{|\zeta|<s\}$,
\begin{equation}\label{eq:poincare-s}
  \lambda_s(\zeta)=\frac{2s}{s^2-|\zeta|^2}.
\end{equation}
It satisfies
\begin{equation}\label{eq:poincare-s-eq}
  \Delta\log\lambda_s=\lambda_s^2
\end{equation}
and tends to $+\infty$ as $|\zeta|\to s$.

The quotient $\lambda/\lambda_s$ is continuous on $\D_s$.  It tends to
zero at the boundary because $\overline{\D_s}$ is compactly contained in
$\D$, so $\lambda$ is bounded there, whereas $\lambda_s$ diverges.  It
also vanishes at every
zero of $\lambda$.  Suppose, for contradiction, that
$\lambda/\lambda_s$ takes a value greater than $1$.  It then has an
interior maximum at a point $\zeta_0$ where $\lambda(\zeta_0)>0$ and
\begin{equation}\label{eq:ratio-big}
  \lambda(\zeta_0)>\lambda_s(\zeta_0).
\end{equation}
Since the logarithm is strictly increasing,
$\log(\lambda/\lambda_s)$ also has a local maximum at $\zeta_0$.
This function is $C^2$ near $\zeta_0$, so
\[
\Delta\log\frac{\lambda}{\lambda_s}(\zeta_0)\leq 0.
\]
On the other hand, \eqref{eq:AS-hyp}, \eqref{eq:poincare-s-eq}, and
\eqref{eq:ratio-big} give
\[
  \Delta\log\frac{\lambda}{\lambda_s}(\zeta_0)
  \geq\lambda(\zeta_0)^2-\lambda_s(\zeta_0)^2>0,
\]
a contradiction.  Hence $\lambda\leq\lambda_s$ on $\D_s$.  For fixed
$\zeta\in\D$, letting $s\uparrow1$ proves \eqref{eq:AS-disc-bound}.
\end{proof}

\begin{corollary}[The punctured-disc estimate]
\label{cor:AS-punctured}
Let $\lambda^2|dz|^2$ be a conformal pseudometric on $\Ds$ satisfying the
hypotheses of \Cref{lem:AS-disc}.  Then
\begin{equation}\label{eq:cusp-bound}
  \lambda(z)\leq
  \frac{1}{|z|\log(1/|z|)}.
\end{equation}
\end{corollary}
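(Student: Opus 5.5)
The plan is to pull the disc estimate of \Cref{lem:AS-disc} back through the universal covering of the punctured disc, $\pi\colon\mathbb H\to\Ds$, $\pi(\tau)=e^{i\tau}$, where $\mathbb H=\{\operatorname{Im}\tau>0\}$. The pullback $\mu(\tau):=\lambda(\pi(\tau))\,|\pi'(\tau)|$ defines a conformal pseudometric on $\mathbb H$. Since $\pi$ is a local biholomorphism, $\log|\pi'|$ is harmonic, so $\Delta_\tau\log\mu=|\pi'|^2(\Delta\log\lambda)\circ\pi\ge\mu^2$ wherever $\mu>0$. Zeros of $\mu$ are preimages of zeros of $\lambda$, and they keep the form \eqref{eq:pseudometric-zero} with the same multiplicity: $\lambda\circ\pi$ has a local factor $|\pi(\tau)-z_0|^m=|\tau-\tau_0|^m\cdot(\text{smooth positive})$, and $|\pi'|$ is smooth and positive.

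Next I will transport to the unit disc with the Cayley map $C(\zeta)=i\frac{1+\zeta}{1-\zeta}$, which biholomorphically maps $\D$ onto $\mathbb H$ with $C(0)=i$. The same harmonicity argument applies to $\log|C'|$, so the hypotheses of \Cref{lem:AS-disc} hold for $\nu(\zeta)=\mu(C(\zeta))|C'(\zeta)|$. The Lemma gives $\nu\le 2/(1-|\zeta|^2)$. The right-hand side is the Poincar\'e density of $\D$, and $C$ is an isometry onto the hyperbolic density $1/\operatorname{Im}\tau$ of $\mathbb H$. Hence
\[
  \mu(\tau)\le\frac{1}{\operatorname{Im}\tau}
\]
on all of $\mathbb H$. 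To avoid relying on the isometry statement, I can instead evaluate at $\zeta=0$, where $|C'(0)|=2$ gives $\mu(i)\le1$. The general bound then follows by applying this to $\mu\circ A$ for real affine maps $A(\tau)=a\tau+b$ with $a>0$, which preserve $\mathbb H$ and the hypotheses.

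Finally, I will return to $\Ds$. Write $z=e^{i\tau}$ with $\tau=\theta+it$, so that $|z|=e^{-t}$ and $t=\log(1/|z|)$. Also $|\pi'(\tau)|=|ie^{i\tau}|=|z|$. Therefore
\[
  \lambda(z)\,|z|=\mu(\tau)\le\frac1t=\frac{1}{\log(1/|z|)},
\]
which is \eqref{eq:cusp-bound}.

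The only step needing care is checking that the zero structure and the differential inequality survive composition with local biholomorphisms. This reduces to the conformal transformation rule for $\Delta\log$ together with the harmonicity of $\log|f'|$ for a nonvanishing holomorphic $f'$. I expect no real obstacle, only bookkeeping of the factor $|z|$ coming from $\pi'$.
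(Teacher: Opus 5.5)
Your proposal is correct and follows the paper's argument: the paper lifts by $e^\zeta$ to the left half-plane, you lift by $e^{i\tau}$ to the upper half-plane, and both then transport to $\D$ by a M\"obius map, apply \Cref{lem:AS-disc}, and read off the bound from the curvature $-1$ half-plane density. Your explicit checks that the zero structure and the inequality $\Delta\log\mu\geq\mu^2$ survive the pullback, together with the normalisation by affine maps $\tau\mapsto a\tau+b$, supply details the paper leaves implicit.
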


\begin{proof}
The left half-plane
\[
  H_-:=\{\zeta\in\C:\operatorname{Re}\zeta<0\}
\]
is a universal cover of $\Ds$ by the map $\pi(\zeta)=e^\zeta$.  Pulling
back the pseudometric gives the density
\begin{equation}\label{eq:lift-density}
  \widehat\lambda(\zeta)
  =\lambda(e^\zeta)|e^\zeta|.
\end{equation}
The maximum-principle argument in \Cref{lem:AS-disc}, transported by a
M\"obius map from $H_-$ to $\D$, compares
$\widehat\lambda$ with the curvature $-1$ Poincar\'e density of $H_-$:
\begin{equation}\label{eq:left-density}
  \widehat\lambda(\zeta)\leq
  \frac{1}{-\operatorname{Re}\zeta}.
\end{equation}
Since $|e^\zeta|=e^{\operatorname{Re}\zeta}$ and
$-\operatorname{Re}\zeta=\log(1/|e^\zeta|)$, equations
\eqref{eq:lift-density}--\eqref{eq:left-density} give
\eqref{eq:cusp-bound}.
\end{proof}

\section{Polynomial area growth and meromorphicity of the cubic differential}
\label{sec:main-proof}

We now combine the local equation and the Ahlfors–Schwarz
 estimate from \Cref{sec:local-equation} to prove the main theorem. The proof is separated into
short steps in order to make every estimate explicit.

\begin{proof}[Proof of \Cref{thm:main}]
Fix $R$, $M$, and $N$ as in \Cref{def:area-growth}.

\smallskip
\noindent\emph{Step 1: Construct the hyperbolic pseudometric.}

Write $g=e^{-u}|dz|^2$ and $\Xi=q(z)\,dz^3$.  If $q\equiv0$, then it
already extends holomorphically across the puncture, so assume for now that
$q\not\equiv0$.  Define
\begin{equation}\label{eq:lambda-def}
  \lambda=4|q|e^u=\frac{4|q|}{w}.
\end{equation}
On the complement of the discrete zero set of $q$, holomorphicity gives
$\Delta\log|q|=0$.  Hence \Cref{prop:SK-equation} yields
\begin{equation}\label{eq:lambda-hyp}
  \Delta\log\lambda
  =\Delta u
  =16|q|^2e^{2u}
  =\lambda^2.
\end{equation}
If $q$ has a zero of order $m$ at $z_0\neq0$, then
\[
  q(z)=(z-z_0)^m h(z),
  \qquad h(z_0)\neq0,
\]
and consequently
\[
  \lambda(z)=|z-z_0|^m e^{v(z)}
\]
for a smooth function $v$.  Thus $\lambda^2|dz|^2$ satisfies all the
hypotheses of \Cref{cor:AS-punctured}.  We conclude that
\begin{equation}\label{eq:lambda-final-bound}
  \frac{4|q(z)|}{w(z)}
  \leq\frac{1}{|z|\log(1/|z|)}.
\end{equation}
Equivalently, 
\begin{equation}\label{eq:key-lower}
  |q(z)|\leqslant\frac{w(z)}{4|z|\log\frac1{|z|}}.
\end{equation}
This is the key estimate.  Notice its direction: the special K\"ahler
metric must become large wherever the cubic differential becomes large.

\medskip
\noindent\emph{Step 2: Convert the area bound into a pointwise bound.}

Fix $z\in\Ds$ with $|z|=\rho$, where $\rho>0$ is so small that
\begin{equation}\label{eq:rho-small}
  \frac{3\rho}{2}<R,
  \qquad
  \log(1/\rho)>2.
\end{equation}
Let
\begin{equation}\label{eq:small-disc}
  B_z:=B\left(z,\frac{\rho}{4}\right).
\end{equation}
For every $\zeta\in B_z$,
\begin{equation}\label{eq:radii-compare}
  \frac{3\rho}{4}<|\zeta|<\frac{5\rho}{4}.
\end{equation}
After decreasing the upper bound on $\rho$ once more, there is an absolute
constant $c_0>0$ such that
\begin{equation}\label{eq:log-compare}
  |\zeta|\log\frac1{|\zeta|}
  > c_0\rho\log\frac1\rho
  \qquad(\zeta\in B_z).
\end{equation}
Indeed, \eqref{eq:radii-compare} gives 
$|\zeta| > 3\rho/4$, while
\[
  \log(1/|\zeta|)  
  > \log(1/\rho)-\log(5/4)
  > \tfrac12\log(1/\rho)
\]
for all sufficiently small $\rho$.  Thus one may take, for example,
$c_0=1/4$.

Because $q$ is holomorphic on $B_z$, the function $|q|$ is subharmonic.
The submean inequality, followed by \eqref{eq:key-lower} and
\eqref{eq:log-compare}, gives
\begin{align}
  |q(z)|
  &\leq
  \frac{1}{\pi(\rho/4)^2}
  \int_{B_z}|q(\zeta)|\,\dd A_0(\zeta)
  \notag\\
  &<
  \frac{C_1}{\rho^3\log(1/\rho)}
  \int_{B_z}w(\zeta)\,\dd A_0(\zeta),
  \label{eq:submean-estimate}
\end{align}
where $\dd A_0=\dd x\,\dd y$ and $C_1$ is absolute.  By
\eqref{eq:radii-compare},
\[
  B_z\subset\{\rho/2<|\zeta|<R\}.
\]
The area hypothesis \eqref{eq:area-eps}, applied with $\eps=\rho/2$,
therefore implies
\begin{equation}\label{eq:q-growth}
  |q(z)|
  <
  \frac{C_2}{\log(1/\rho)}\rho^{-(N+3)},
  \qquad |z|=\rho,
\end{equation}
where $C_2=C_1M2^N$.  This bound is uniform in the argument of $z$.

\medskip
\noindent\emph{Step 3: Obtain meromorphic extension and bound the pole
order.}

Choose an integer $m>N+3$.  From \eqref{eq:q-growth},
\begin{equation}\label{eq:z-m-q}
  |z^mq(z)|
  <
  C_2\frac{|z|^{m-N-3}}{\log(1/|z|)}
  \longrightarrow0
  \qquad(z\to0).
\end{equation}
The function $z^mq(z)$ is holomorphic on $\Ds$ and bounded near $0$.
By Riemann's removable singularity theorem, it therefore 
extends holomorphically across $0$.  It follows that $q$ has at worst a 
pole of finite order at $0$.

It remains to prove the sharper bound \eqref{eq:pole-bound}.  Suppose that
$q$ has a pole of order $p$.  Then
\begin{equation}\label{eq:q-leading}
  q(z)=az^{-p}(1+o(1)),
  \qquad a\neq0.
\end{equation}
For all sufficiently small $\rho$,
\[
  |q(z)|\geq\frac{|a|}{2}\rho^{-p}
  \qquad(|z|=\rho).
\]
Combining this with \eqref{eq:q-growth} gives
\begin{equation}\label{eq:pole-compare}
  \frac{|a|}{2}
  <
  C_2\frac{\rho^{p-N-3}}{\log(1/\rho)}.
\end{equation}
If $p>N+3$, the right-hand side tends to zero.  It also tends to zero if
$p=N+3$, because of the logarithm in the denominator.  Both cases are
impossible, proving $p<N+3$ and completing the proof of
\Cref{thm:main}.
\end{proof}

For readers who prefer to see the exclusion of an essential singularity directly in terms of Laurent coefficients, we give an alternative proof of the pole-order bound in Step 3. It also makes the strict inequality in \eqref{eq:pole-bound} transparent.

\begin{proof}[Alternative proof of the meromorphic extension and the pole
bound]

Let
\begin{equation}\label{eq:Laurent}
  q(z)=\sum_{j=-\infty}^{\infty}a_jz^j
\end{equation}
be the Laurent expansion of $q$ on $\Ds$.  For every integer $k\geq1$
and every $0<t<R$, Cauchy's coefficient formula gives
\begin{align}
  |a_{-k}|
  &\leq
  \frac{t^k}{2\pi}
  \int_0^{2\pi}|q(te^{i\theta})|\,\dd\theta,
  \notag\\
  \int_0^{2\pi}|q(te^{i\theta})|\,\dd\theta
  &\geq2\pi|a_{-k}|t^{-k}.
  \label{eq:Cauchy-L1}
\end{align}
Choose a fixed $R_0\in(0,R)$ and take $0<\eps<R_0$.  Integrating the key estimate
\eqref{eq:key-lower} in polar coordinates and using
\eqref{eq:Cauchy-L1}, we obtain
\begin{align}
  A_g(\eps;R)
  &\geq
  4\int_\eps^{R_0}
  t^2\log(1/t)
  \left(\int_0^{2\pi}|q(te^{i\theta})|\,\dd\theta\right)\dd t
  \notag\\
  &\geq
  8\pi|a_{-k}|
  \int_\eps^{R_0}t^{2-k}\log(1/t)\,\dd t.
  \label{eq:Laurent-area-lower}
\end{align}

If $k>3$, then for sufficiently small $\eps$ there is a constant
$c_k>0$, independent of $\eps$, such that
\begin{equation}\label{eq:integral-k-large}
  \int_\eps^{R_0}t^{2-k}\log(1/t)\,\dd t
  \geq c_k\eps^{3-k}\log(1/\eps).
\end{equation}
For example, if $2\varepsilon<R_0$ and 
$\varepsilon\leqslant 1/4$ (in which case 
$\log\frac{1}{2\varepsilon}\geqslant\frac{1}{2}\log\frac{1}{\varepsilon}$), 
then on the interval $[\varepsilon, 2\varepsilon]$, $t^{2-k}\log(1/t)$ 
is monotonically decreasing; therefore
\[
	\int_{\varepsilon}^{R_0}t^{2-k}\log(1/t)dt \geqslant 
	\int_{\varepsilon}^{2\varepsilon}t^{2-k}\log(1/t)dt
	\geqslant\int_{\varepsilon}^{2\varepsilon}(2\varepsilon)^{2-k}\log(1/2\varepsilon)dt
	\geqslant 2^{1-k}\varepsilon^{3-k}\log(1/\varepsilon).
\]
	In this case \(c_k=2^{1-k}\).
For $k=3$ the integral is explicit:
\begin{equation}\label{eq:integral-k3}
  \int_\eps^{R_0}\frac{\log(1/t)}{t}\,\dd t
  =\frac12\left(\log^2(1/\eps)-\log^2(1/R_0)\right).
\end{equation}
Comparing \eqref{eq:Laurent-area-lower} with
$A_g(\eps;R)\leq M\eps^{-N}$ shows that
\begin{equation}\label{eq:coeff-vanish}
  a_{-k}=0
  \qquad\text{whenever }k-3\geq N.
\end{equation}
Indeed, if $k-3>N$, the power of $1/\eps$ on the right-hand side of
\eqref{eq:Laurent-area-lower} is too large; if $k-3=N$, the additional
factor $\log(1/\eps)$ still gives a contradiction.  For $N=0$ and $k=3$,
the square logarithm in \eqref{eq:integral-k3} gives the same conclusion.

Only finitely many negative Laurent coefficients remain.  Hence $q$ is
meromorphic, and the largest possible pole order is strictly less than
$N+3$. This gives an alternative proof of the conclusion of \Cref{thm:main} directly from Laurent's theorem.
\end{proof}

\section{Consequences and examples}
\label{sec:consequences}

\subsection{Essential cubic singularities admit no polynomial area bound}

\begin{corollary}\label{cor:essential}
If the associated cubic differential of an affine special K\"ahler metric
has an essential singularity at the origin, then for every choice of
$R\in(0,1)$, $M>0$, and $N\geq0$, there are arbitrarily small $\eps>0$
such that
\[
  A_g(\eps;R)>M\eps^{-N}.
\]
\end{corollary}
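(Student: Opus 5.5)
The plan is to argue by contraposition from \Cref{thm:main}. Suppose the cubic differential $\Xi=q\,dz^3$ has an essential singularity at the origin. Suppose also, for contradiction, that there exist $R\in(0,1)$, $M>0$ and $N\geq0$ with the following property: for all sufficiently small $\eps>0$, say $0<\eps<\eps_0$, we have $A_g(\eps;R)\leq M\eps^{-N}$. This is the negation of ``arbitrarily small $\eps$ with $A_g(\eps;R)>M\eps^{-N}$''. The goal is then to upgrade this to the hypothesis of \Cref{def:area-growth}, which requires the bound for \emph{every} $\eps\in(0,R)$, possibly with a larger constant.

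The upgrade is elementary. For $\eps_0\leq\eps<R$, monotonicity of $A_g(\cdot;R)$ in $\eps$ gives
\[
A_g(\eps;R)\leq A_g(\eps_0/2;R)\leq M(\eps_0/2)^{-N}.
\]
This quantity is finite by the assumed bound. Since $\eps<R<1$, we have $\eps^{-N}\geq1$, so the same quantity is at most $M(\eps_0/2)^{-N}\eps^{-N}$. Setting
\[
M':=\max\{M,\,M(2/\eps_0)^{N}\},
\]
we obtain $A_g(\eps;R)\leq M'\eps^{-N}$ for all $0<\eps<R$. So $g$ has polynomial area growth in the sense of \Cref{def:area-growth}.

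With this hypothesis in place, \Cref{thm:main} shows that $q$ extends meromorphically across $0$. This contradicts the essential singularity, which proves the corollary.

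There is no real obstacle here. The only point needing care is the quantifier bookkeeping: the corollary asserts failure for arbitrarily small $\eps$, whereas the theorem's hypothesis is a bound on the whole range $(0,R)$. The constant adjustment above bridges exactly this gap, and it uses only that $A_g(\eps;R)$ is nonincreasing in $\eps$ and finite for each fixed $\eps>0$.
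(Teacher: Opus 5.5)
Your proposal is correct and follows the paper's proof: negate to get the bound for all sufficiently small $\eps$, enlarge $M$ so that it holds on all of $(0,R)$, and apply \Cref{thm:main} to reach a contradiction. The only difference is cosmetic. You justify enlarging the constant through monotonicity of $A_g(\cdot\,;R)$ and the bound at $\eps_0/2$, which also makes the constant $M'$ explicit. The paper instead cites finiteness of the $g$-area on compact subannuli.
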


\begin{proof}
If the conclusion failed, then for some $R,M,N$ the estimate
$A_g(\eps;R)\leq M\eps^{-N}$ would hold for all sufficiently small
$\eps$.  Increasing $M$ if necessary makes the same estimate valid for
all $0<\eps<R$, because $g$ has finite area on every compact subannulus.
\Cref{thm:main} would then make $q$ meromorphic at the origin, a
contradiction.
\end{proof}

Thus the examples with an essential cubic differential constructed in
\cite{HaydysXu2020} cannot satisfy any polynomial area upper bound at
such a puncture.  This is an area statement about the special K\"ahler
metric $g$; no area condition on the associated hyperbolic pseudometric
is assumed.

\subsection{A quantitative lower bound from the pole order}

Suppose $q$ has a pole of order $p$, so that
$q(z)=az^{-p}(1+o(1))$ with $a\neq0$.  The key estimate
\eqref{eq:key-lower} yields, for sufficiently small $|z|$,
\begin{equation}\label{eq:w-pole-lower}
  w(z)\geq c|z|^{1-p}\log(1/|z|)
\end{equation}
for some $c>0$.  Integrating gives the following useful lower bounds:
\begin{equation}\label{eq:area-pole-lower}
  A_g(\eps;R_0)\geq
  \begin{cases}
    c_p\eps^{3-p}\log(1/\eps),&p>3,\\[2mm]
    c_3\log^2(1/\eps),&p=3,
  \end{cases}
\end{equation}
for a sufficiently small fixed $R_0$ and positive constants $c_p$.
This explains both the shift by $3$ in \eqref{eq:pole-bound} and why the
inequality is strict.  When $N=0$, for example, the cubic differential
can have at most a double pole.

The logarithmic asymptotic model in \eqref{eq:Haydys-types}, with
$n=-p$, has
\[
  w\asymp |z|^{1-p}\log(1/|z|),
\]
so the powers in \eqref{eq:area-pole-lower} are optimal up to the
borderline logarithmic factor.

\subsection{The identically zero cubic differential}

The phrase ``$\Xi$ has finite order'' normally excludes the zero
differential, whose order is either left undefined or declared to be
$+\infty$.  This is the only terminological exception to the statement
that polynomial area growth forces finite order.

\begin{example}\label{ex:zero-cubic}
On $\Ds$, consider
\begin{equation}\label{eq:cylinder}
  g=|z|^{-2}|dz|^2
\end{equation}
and take $\nabla$ to be the Levi--Civita connection.  The metric is flat,
so $\nabla$ is flat, torsion-free and symplectic, and $\nabla I=0$.
It is therefore an affine special K\"ahler structure with
$\Xi\equiv0$.  The puncture is a genuine cylindrical end, and
\begin{equation}\label{eq:cylinder-area}
  \Area_g\{1/r<|z|<R\}=2\pi\log(Rr),
\end{equation}
which satisfies a polynomial bound, for instance with $N=1$.
Nevertheless, $\ord_0\Xi$ is not a finite integer.
\end{example}

There is no analytic pathology here: $\Xi\equiv0$ extends holomorphically
and has no essential singularity.  Moreover, by
\Cref{rem:curvature}, $\Xi\equiv0$ implies that $g$ is flat.
In this special case the theorem of Li and Xu \cite{LiXu2020} applies
directly and supplies the appropriate local metric classification.

\subsection{Equivalence with the finite-order hypothesis}

Combining \Cref{thm:main} with \cite[Theorem~1.1]{Haydys2015} gives a
useful equivalent formulation.

\begin{corollary}\label{cor:Haydys}
Let $g=w|dz|^2$ be an affine special K\"ahler metric on $\Ds$ whose
associated cubic differential $\Xi$ is not identically zero.  The
following conditions are equivalent:
\begin{enumerate}
  \item $g$ has polynomial area growth at the puncture;
  \item $\Xi$ extends meromorphically across the puncture.
\end{enumerate}
If these conditions hold and $n=\ord_0\Xi$, then $w$ has one of the two
asymptotic forms in \eqref{eq:Haydys-types}.
\end{corollary}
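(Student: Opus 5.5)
The equivalence splits into two implications. Direction (1)$\Rightarrow$(2) is exactly \Cref{thm:main}: polynomial area growth makes $q$ meromorphic at $0$. Because $\Xi\not\equiv0$, $\ord_0\Xi$ is then a finite integer. The asymptotic statement then follows by quoting \cite[Theorem~1.1]{Haydys2015}, whose only hypothesis beyond the special K\"ahler structure is that $\Xi$ has finite order at the puncture. This yields the alternatives \eqref{eq:Haydys-types} with $n=\ord_0\Xi$.

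For the converse (2)$\Rightarrow$(1), I would first apply Haydys' classification directly. Since $\Xi$ is meromorphic and nonzero, $n=\ord_0\Xi\in\Z$ is defined, and $w$ has one of the two forms in \eqref{eq:Haydys-types}. In each case I bound $w$ above near $0$ by a power of $|z|$.
\begin{itemize}
\item[(a)] In the logarithmic case, $w=-|z|^{n+1}\log|z|\,e^{O(1)}$. Hence $w\le C|z|^{n+1}\log(1/|z|)\le C'|z|^{n}$ for small $|z|$, since $|z|\log(1/|z|)\to0$.
\item[(b)] In the power case, $w=|z|^\beta(C+o(1))\le 2C|z|^\beta$ for small $|z|$.
\end{itemize}
In both cases $w\le C''|z|^{-\gamma}$ on some $\{0<|z|<R\}$, where $\gamma\in\mathbb R$ is fixed ($\gamma=-n$ or $\gamma=-\beta$).

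Next I integrate in polar coordinates:
\[
A_g(\eps;R)\le 2\pi C''\int_\eps^R t^{1-\gamma}\,\dd t .
\]
This is bounded if $\gamma<2$, is $O(\log(1/\eps))$ if $\gamma=2$, and is $O(\eps^{2-\gamma})$ if $\gamma>2$. Each case is dominated by $M\eps^{-N}$ with $N=\max(\gamma-2,0)+1$ for all $0<\eps<R$. To arrange this, enlarge $M$ to cover $\eps$ near $R$, where $A_g$ stays bounded because $w$ is continuous on compact subannuli. I would fix $R\in(0,1)$ small enough that the asymptotics apply on $\{0<|z|<R\}$. This gives \Cref{def:area-growth}.

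The main obstacle is making sure Haydys' theorem is invoked under exactly its hypotheses. His statement concerns the local equation of \Cref{prop:SK-equation} with $\Xi$ of finite order, and the $O(1)$ and $o(1)$ terms must be genuinely uniform as $z\to0$, not merely along rays. I would check that the cited form is a uniform asymptotic in $|z|$. If it is not, the fallback is to work from \Cref{prop:SK-equation} directly and bound $\log w$ from above by a multiple of $\log(1/|z|)$. To do this, observe that $\log w$ is superharmonic by \eqref{eq:logw-PDE}, and use the lower bound \eqref{eq:key-lower} only as a check on consistency. Superharmonicity by itself gives lower rather than upper control, though, so the fallback is weaker and I would rely on the citation. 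Finally, the coordinate-independence remarks preceding \Cref{thm:main} show that both conditions and the integer $n$ are intrinsic, so no choice of coordinate affects the argument.
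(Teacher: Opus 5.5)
Your proposal is correct and follows the paper's proof: (1)$\Rightarrow$(2) is \Cref{thm:main}, and for (2)$\Rightarrow$(1) you invoke Haydys' classification and integrate an upper bound for $w$ in polar coordinates. The only difference is cosmetic. You absorb the factor $\log(1/|z|)$ into one extra power of $|z|$, whereas the paper keeps it and records sharper rates (bounded, $\log^2(1/\eps)$, or power growth); that extra precision is not needed for polynomial growth.
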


\begin{proof}
The implication from (1) to (2) is \Cref{thm:main}.  Assume (2).
Because $\Xi$ is not identically zero, its order $n$ is a finite integer.
Haydys' classification \cite[Theorem~1.1]{Haydys2015} gives the two
alternatives in \eqref{eq:Haydys-types}.

It remains to verify (1).  In the logarithmic alternative,
$e^{O(1)}$ is bounded above near the origin, and hence the area is bounded
by a constant multiple of
\[
  \int_\eps^{R_0}r^{n+2}\log(1/r)\,\dd r.
\]
This integral stays bounded when $n>-3$, grows like
$\log^2(1/\eps)$ when $n=-3$, and has at most power growth when
$n<-3$.  In the power-law alternative, the area is bounded by a constant
multiple of
\[
  \int_\eps^{R_0}r^{\beta+1}\,\dd r.
\]
This integral stays bounded when $\beta>-2$, grows like
$\log(1/\eps)$ when $\beta=-2$, and has power growth when $\beta<-2$.
In every case some estimate of the form \eqref{eq:area-eps} holds.
\end{proof}

\begin{remark}
The exponent $N$ in the area hypothesis is not intrinsic.  If
\eqref{eq:area-eps} holds for $N$, it also holds for every larger
exponent.  The set of admissible exponents need not have a smallest
element: logarithmic area growth, for example, admits every $N>0$ but
not $N=0$.  The quantitative statement $p<N+3$ is therefore most
informative when $N$ is close to the infimum of the admissible exponents.
\end{remark}

\section*{Acknowledgements}

OpenAI's GPT-5.6 Sol and GPT-6 Astra assisted throughout the research and
preparation of this article, including proof development and manuscript
drafting.  The authors revised all AI-assisted material with careful
scrutiny and take full responsibility for all mathematical claims and for
the final text.  Y.S. is supported in part by the National Natural Science
Foundation of China (Grant No. 11931009).  B.X. is supported in part by the
Project of Stable Support for Youth Team in Basic Research Field, CAS
(Grant No. YSBR-001) and NSFC (Grant Nos. 12271495).


\begin{thebibliography}{99}

\bibitem{Ahlfors1938}
L.~V. Ahlfors,
\emph{An extension of Schwarz's lemma},
Transactions of the American Mathematical Society \textbf{43} (1938),
no.~3, 359--364,
\href{https://doi.org/10.1090/S0002-9947-1938-1501949-6}
{doi:10.1090/S0002-9947-1938-1501949-6}.

\bibitem{Freed1999}
D.~S. Freed,
\emph{Special K\"ahler manifolds},
Communications in Mathematical Physics \textbf{203} (1999), no.~1,
31--52,
\href{https://doi.org/10.1007/s002200050604}
{doi:10.1007/s002200050604}.

\bibitem{Haydys2015}
A. Haydys,
\emph{Isolated singularities of affine special K\"ahler metrics in two
dimensions},
Communications in Mathematical Physics \textbf{340} (2015), no.~3,
1231--1237,
\href{https://doi.org/10.1007/s00220-015-2441-6}
{doi:10.1007/s00220-015-2441-6}.

\bibitem{HaydysXu2020}
A. Haydys and B. Xu,
\emph{Special K\"ahler structures, cubic differentials and hyperbolic
metrics},
Selecta Mathematica (New Series) \textbf{26} (2020), Paper No.~37,
\href{https://doi.org/10.1007/s00029-020-00560-y}
{doi:10.1007/s00029-020-00560-y}.

\bibitem{LiXu2020}
J. Li and B. Xu,
\emph{Isolated singularities of flat metrics on Riemann surfaces},
Proceedings of the American Mathematical Society \textbf{148} (2020),
no.~9, 4057--4064,
\href{https://doi.org/10.1090/proc/15044}{doi:10.1090/proc/15044}.

\bibitem{Lu1999}
Z. Lu,
\emph{A note on special K\"ahler manifolds},
Mathematische Annalen \textbf{313} (1999), no.~4, 711--713,
\href{https://doi.org/10.1007/s002080050278}
{doi:10.1007/s002080050278}.

\end{thebibliography}
\end{document}